\newtheorem{theorem}{Theorem}
\newtheorem{corollary}[theorem]{Corollary}
\newtheorem{lemma}[theorem]{Lemma}
\newenvironment{proof}[1][Proof]{\textbf{#1.} }{\ \rule{0.5em}{0.5em}}
\begin{document}

\title{A Dutch Book theorem for partial subjective probability}
\author{Maurizio Negri, \\
Universit\`{a} di Torino}
\maketitle
\begin{abstract}

The aim of this paper is to show that partial probability can be justified
from the standpoint of subjective probability in much the same way as
classical probability does. The seminal works of Ramsey and De Finetti have
furnished a method for assessing subjective probabilities: ask about the
bets the decision-maker would be willing to place. So we introduce
the concept of partial bet and partial Dutch Book and prove for partial probability
a result similar to the Ramsey-De Finetti theorem. Finally, we make a comparison 
between two concepts of bet: we can bet our money on a sentence describing an event, or we can
bet our money on the event itself, generally conceived as a set. These two
ways of understanding a bet are equivalent in classical probability, but not in
partial probability.

\vspace{5mm} \noindent \textbf{Keywords:} Dutch Book theorem; non-classical probability; 
Kleene's Logic.
\end{abstract}

\section{Partial probability\label{parparprob}}

Classical probability theory is grounded on the concept of probability
space, a triple \ $(A,\mathcal{C}_{A},p)$ where $A$ is a sample space, $%
\mathcal{C}_{A}$ is a field of sets on $A$ and $p:\mathcal{C}_{A}\rightarrow
\lbrack 0,1]$ is a probability measure, i.e. a function satisfying
Kolmogoroff's axioms: 1) $p(A)=1$, 2) $p(X\cup Y)=p(X)+p(Y)$, when $X\cap
Y=\emptyset $. In this way probability is seen as the measure of an event
represented \ by a set of outcomes. Partial probability theory arises when
we substitute classical events with partial events (see \cite{negri2010}).
If $S$ is the sample space of an experiment, we define the set of all 
\textit{partial sets on} $S$ as the set $D(S)=\{(A,B):A,B\subseteq S,$ and $%
A\cap B=\emptyset \}$. Every $x\in A$ (resp.$B$) is said to be a \textit{%
positive} (resp. \textit{negative})\textit{\ }element of the partial set $%
(A,B)$. Partial sets are structured by the following operations: 
\begin{eqnarray*}
(A,B)\sqcap (C,D) &=&(A\cap C,B\cup D), \\
(A,B)\sqcup (C,D) &=&(A\cup C,B\cap D), \\
-(A,B) &=&(B,A), \\
0 &=&(\emptyset ,S), \\
1 &=&(S,\emptyset ), \\
n &=&(\emptyset ,\emptyset ).
\end{eqnarray*}
\noindent The algebra $\mathcal{D}(S)=(D(S),\sqcap ,\sqcup ,-,0,n,1)$ is the 
\textit{algebra of all partial sets} on $S$. A\textit{\ field of partial set}
is any subalgebra $\mathcal{G}_{S}\subseteq \mathcal{D}(S)$. A partial set $%
(A,B)$ is a \textit{Boolean partial set} if $A=S-B$. The set of Boolean
partial sets\ is a Boolean algebra isomorphic to $\mathcal{P}(S)$, the
classical power-set algebra. We define a binary relation between partial
sets setting $(A,B)\sqsubseteq (C,D)$ iff $A\subseteq C$ and $D\subseteq B$.
\noindent It can be easily proved that $(D(S),\sqsubseteq )$ is a partially
ordered set with $(S,\emptyset )$ as top and $(\emptyset ,S)$ as bottom
element.

When $S$ is a sample space, we say that $\mathcal{D}(S)$ is the \textit{%
algebra of partial events} on $S$. In relation with the experimental result $%
s\in S$, we say that $(A,B)$ occurs positively if $s\in A$, occurs
negatively if $s\in B$, is uncertain otherwise. Events of classical
probability theory are to be identified with Boolean partial sets and will
be called \textit{Boolean} or \textit{classical events}.

We define on $R^{2}$ the relation $\preceq $ setting $(x,y)\preceq (w,z)$
iff $x\leq w$ e $z\leq y$. (Note that the natural order of $R$ is reversed
on the second elements of the ordered pairs.) It can be easily shown that $%
(R^{2},\preceq )$ is a partially ordered set. The operations $x+y$, $xy$ and 
$-x$ in $R$ are extended pointwise to the product $R^{2}$, i.e. $%
(x,y)+(z,w)=(x+z,y+w)$, $(x,y)(z,w)=(xz,yw)$ and $-(x,y)=(-x,-y)$. \ We
define the set of \textit{partial probability values} $T$ as a subset of $%
R^{2}$, setting $T=\{(x,y)\in \lbrack 0,1]^{2}:$ $x+y\leq 1\}$. The
probability value of a partial event $(A,B)$ is a pair $(x,y)\in T$. The set
of partial probability values $T$ is partially ordered by $\preceq $ with a
maximum $(1,0)$ and a minimum $(0,1)$.

Given a partial field of set $\mathcal{G}_{S}$ on $S$ and a function $\mu
:G_{S}\rightarrow T$, we say that $\mu $ is a \textit{measure of partial
probability} \ when the following axioms are satisfied:

\begin{enumerate}
\item  $\mu (S,\emptyset )=(1,0)$.

\item  $\mu (A,B)+\mu (C,D)=\mu ((A,B)\sqcup (C,D))-\mu ((A,B)\sqcap (C,D)).$

\item  $\mu (-(A,B))=\sigma (\mu (A,B))$,

\item  $(\emptyset ,\emptyset )\sqsubseteq (A,B)$ implies $(0,0)\preceq \mu
(A,B)$,
\end{enumerate}

\noindent where $\sigma :[0,1]^{2}\rightarrow \lbrack 0,1]^{2}$ is defined
by $\sigma (x,y)=(y,x)$. \ As a consequence of axiom 4, we have $%
(0,0)\preceq \mu (A,\emptyset )$, for all $(A,\emptyset )\in \nabla $. A 
\textit{partial probability space} is a triple $(S,\mathcal{G}_{S},\mu )$
where $\mathcal{G}_{S}$ is a field of partial sets on the sample space $S$
and $\mu $ is a measure of partial probability.

Given a classical probability space $(S,\mathit{P}(S),p)$, we define the 
\textit{partial probability space associated to} $(S,\mathit{P}(S),p)$ as
follows: we define $\mu :D(S)\rightarrow T$ setting 
\begin{equation*}
\mu (A,B)=(p(A),p(B)).
\end{equation*}
As $(A,B)$ is a partial event, $A\cap B=\emptyset $ so $p(A)+p(B)=p(A\cup B)$%
: this proves that $\mu (A,B)\in T$. It can be easily proved that $\mu $
satisfies the four \ axioms above, so $(S,\mathcal{D}(S),\mu )$ is a partial
probability space. For instance, we can start from the classical probability
space $(S,\mathit{P}(S),p)$, where $S=\{n:1\leq n\leq 6\}$ and $p(n)=1/6$
for all $n\in S$. If $(A,B)$ is the Boolean event with $A=\{2,4,6\}$ and $%
B=\{1,3,5\}$, then $\mu (A,B)=(1/2,1/2)$, and if $(C,D)$ is the partial
event with $C=\{2,4\}$ and $D=\{5\}$, then $\mu (C,D)=(1/3,1/6)$. If the
outcome of the experiment is $3$, then we say that $(A,B)$ does not happen
and that $(C,D)$ is uncertain (neither happens nor does not happen). Some
events like $(A,\emptyset )$ have only positive occurrences (may only
happen), others like $(\emptyset ,A)$ have only negative occurrences, $%
(\emptyset ,\emptyset )$ is the absolutely undefined event.

We list some differences between classical and partial probability.

\begin{enumerate}
\item  $(A,B)$ or not-$(A,B)$ is no more the certain event, unless $(A,B)$
is Boolean, i.e. $B=S-A$. In general we can only say that $\mu ((A,B)\sqcup
-(A,B))=\mu (A\cup B,\emptyset )\succeq (0,0)$. For the same reason, $(A,B)$
and not-$(A,B)$ is not, in general, the impossible event.

\item  Additivity holds in the form of axiom 2, but now we have different
kinds of disjointness. In a field of partial sets $\mathcal{D}(S)$, the
partial sets like $(\emptyset ,X)$, belonging to the interval $[(\emptyset
,S),(\emptyset ,\emptyset )]$, are generalizations of the empty set (they
have no positive elements); so we can say that $(A,B)$ is disjoint from $%
(C,D)$ if $(A,B)\sqcap (C,D)=(\emptyset ,X)$, for some $X$. Now the partial
probability of a sum of disjoint sets is the sum of their probabilities, $%
\mu (A,B)+\mu (C,D)=\mu ((A,B)\sqcup (C,D))$, only when $(A,B)$ and $(C,D)$
have the maximum degree of disjointness $(\emptyset ,\emptyset )$.

\item  Partial probability values are only partially ordered by $\preceq $,
so we cannot say, in general, whether a partial event is more probable then
another partial event or not. However, it can be easily seen that the set of
partial probability values of Boolean events is totally ordered by $\preceq $%
.

\item  Conditioning on partial events is possible, but with a slightly
different meaning with respect to the classical case. See \cite{negri2010}
par. 4 and \cite{negri2013} par. 9, 13
\end{enumerate}

The aim of this paper is to show that partial probability can be justified
from the standpoint of subjective probability in much the same way as
classical probability does. The seminal works of Ramsey and De Finetti have
furnished a method for assessing subjective probabilities: ask about the
bets the decision-maker would be willing to place. In par. 2 we introduce
the concept of classical bet and shortly review the proof of Ramsey-De
Finetti theorem for classical probability. The remaining part of this paper
is devoted to prove an analogous result in the case of partial subjective
probability. To this scope we introduce in par. 3 the fundamentals concepts
of Kleene logic. In par. 4 we introduce the concept of partial bet and
partial Dutch Book and prove for partial probability a result similar to 
Ramsey-De Finetti theorem. In par. 5 we make a comparison between two concepts
of bet: we can bet our money on a sentence describing an event, or we can
bet our money on the event itself, generally conceived as a set. These two
ways of understanding a bet are equivalent in the classical case, but not in
the partial case.

\section{Classical bets\label{parclassbet}}

In the subjective approach to probability, the bearers of probability are no
more sets but sentences and the probability value is conceived as the degree
of belief in some event represented by a sentence. So we introduce a
sentential $n$-ary language $L_{n}$ based on the sentential variables $%
P_{n}=\{p_{1},...,p_{n}\}$, the connectives $\{\lnot ,\wedge ,\vee \}$ and
the constants $\{0,1\}$. (Through this text we deal only with finite
languages.) We denote with $F_{n}$ the set of formulas of $L_{n}$ and with $%
\mathcal{F}_{n}$ the algebra of formulas. (We simply write $L$, $P$ and $F$
when no confusion is possible.) We say that a function $\pi :F\rightarrow $ $%
[0,1]$ is a \textit{probability function}, if the following axioms are
satisfied:

\begin{enumerate}
\item  if $\models \alpha $ then $\pi (\alpha )=1$,

\item  if $\models \lnot (\alpha \wedge \beta )$ then $\pi (\alpha \vee
\beta )=\pi (\alpha )+\pi (\beta )$,
\end{enumerate}

\noindent where $\models $ is the consequence relation of bivalent logic.
(See, for instance, \cite{howson1989} or \cite{paris2005}.) Both the
measure-theoretic and the subjectivistic approach to classical probability
are deeply grounded on Boolean algebras. On one side the events of a
probability space constitute a Boolean algebra, on the other side the
definition of a probability function requires the semantics of bivalent
logic and Boolean truth-functions. Owing to this common ground, these two
ways of presenting probability can be translated one into the other (see,
for instance, \cite{negri2013}, par.4).

When does a function $b:F\rightarrow \lbrack 0,1]$, representing the
decision-maker's belief, satisfy the axioms of probability function? The
theorem that we are going to prove, commonly attributed to Ramsey and De
Finetti (see \cite{gillies2000}, p. 53), gives a sufficient condition
through the concept of bet. We understand a \textit{bet} as a triple $%
(\alpha ,x,r)$ where $\alpha \in F$, $x\in \lbrack 0,1]$ and $r\in R$. The
formula $\alpha $ represents the event on which the bet is placed, $x$ is
the betting quotient and $r$ the stake. By accepting this bet, we agree to
pay out $rx$ to receive $r$ from the bookmaker if the event described by $%
\alpha $ takes place, and nothing otherwise. The buyer's payoff can be
described by the following table: 
\begin{equation*}
\begin{array}{cc}
r-rx=r(1-x) & \text{if }\alpha \text{ is true in the actual world,} \\ 
-rx & \text{if }\alpha \text{ is false in the actual world.}
\end{array}
\end{equation*}
We can obtain the bookmaker's payoff by changing the sign of the stake. In
general, when we talk of `payoff', we understand the buyer's payoff.

The payoff is a function of $\alpha $, $x$, $r$ and of the state of the
world that makes $\alpha $ true or false. The definition of such a function
requires some basic concepts in the semantics of classical logic. We
identify with $2^{n}$ the set of all possible worlds, where $2=\{0,1\}$ is
the set of truth-values of classical logic. Then, for every sentential
variable $p_{i}$, we can set $V_{w}(p_{i})=w_{i}$: the truth-value of $p_{i}$
in the world $w$. As $\mathcal{F}$ is the absolutely free algebra, we know
that $V_{w}$ can be extended to a homomorphism $V_{w}:\mathcal{F}\rightarrow 
\mathbf{2}$, where $\mathbf{2}$ denotes the two-element Boolean algebra: $%
V_{w}(\alpha )$ is the truth-value of $\alpha $ in the world $w$. We say
that $\alpha $ is a consequence of $\gamma $, in symbols $\gamma \models
\alpha $, when $V_{w}(\gamma )\leq V_{w}(\alpha )$ for all $w\in 2^{n}$. We
say that $\alpha $ is a consequence of the set of formulas $\Gamma $, and
write $\Gamma \models \alpha $, when $\inf \{V_{w}(\gamma ):\gamma \in
\Gamma \}\leq V_{w}(\alpha )$ for all $w\in 2^{n}$. When $\Gamma =\emptyset $
we write simply $\models \alpha $ and this amounts to $V_{w}(\alpha )=1$,
for all $w\in 2^{n}$, because $\inf \{V_{w}(\gamma ):\gamma \in \emptyset
\}=\inf (\emptyset )=1$. We say that two formulas $\alpha $ and $\beta $ are 
\textit{equivalent} and write\textit{\ }$\alpha \equiv \beta $ iff $\alpha
\models \beta $ and $\beta \models \alpha $.

The semantics of classical logic can also be defined through a function that
associates to every formula $\alpha $ its meaning $M(\alpha )$, conceived as
the set of worlds in which $\alpha $ holds true. To this end we define $%
M(p_{i})=\{w\in 2^{n}:w_{i}=1\}$, for all sentential variable $p_{i}$:
intuitively, $M(p_{i})$ is the set of all worlds making $p_{i}$ true. As $%
\mathcal{F}$ is the absolutely free algebra, we can extend $M$ to a
homomorphism from $\mathcal{F}$ to the Boolean algebra $\mathcal{P}(2^{n})$.
\ The functions $V_{w}$ and $M$ are related in this way: given $V_{w}$, we
can define $M(\alpha )$ as $\{w\in 2^{n}:V_{w}(\alpha )=1\}$ and given $M$
we can define $V_{w}(\alpha )=1$ iff $\ w\in M(\alpha )$ and $V_{w}(\alpha
)=0$ iff $\ w\notin M(\alpha )$. So $\alpha \models \beta $ iff $M$($\alpha
)\subseteq M(\beta )$ and $\alpha \equiv \beta $ iff $M(\alpha )=M(\beta )$.

Now we return to the definition of the payoff. We associate to every bet $%
(\alpha ,x,r)$ a function $[\alpha ,x,r]$:$2^{n}\rightarrow R$ that gives
the \textit{payoff of} $(\alpha ,x,r)$ \textit{in the world} $w$ by 
\begin{equation*}
\lbrack \alpha ,x,r](w)=r(V_{w}(\alpha )-x)=\left\{ 
\begin{array}{ll}
r(1-x) & \text{if }V_{w}(\alpha )=1 \\ 
-rx & \text{if }V_{w}(\alpha )=0\text{.}
\end{array}
\right.
\end{equation*}
This definition can be naturally extended to finite sets of bets as follows.
If $B=\{(\alpha _{i},x_{i},r_{i}):1\leq i\leq n\}$, then we define the
payoff of the set $B$ in the world $w$ as 
\begin{equation*}
\lbrack B](w)=\overset{n}{\underset{i=1}{\sum }}\{[\alpha
_{i},x_{i},r_{i}](w)\}.
\end{equation*}
We say that a set of bets $B$ is a \textit{Dutch Book} if, for all $w\in
2^{n}$,$\ [B](w)<0$. Intuitively, a Dutch Book is a set of bets that
guarantees a sure loss for the buyer. There is, however, a weaker notion of
Dutch Book that will be useful in the following, where the set of bets can
at best break even and, in at least one possible world, has a net loss. We
say that a set of bets $B$ is a \textit{Weak Dutch Book} if, for all $w\in
2^{n}$,$\ [B](w)\leq 0$ and in at least one world $w$, $[B](w)<0$. (This
kind of Dutch Book has been introduced in \cite{shimony1955}.) If we take
the values of $b:F\rightarrow \lbrack 0,1]$ as betting quotients, i.e. the
bets in $B$ are of kind $(\alpha _{i},b(\alpha _{i}),r)$, and if a Dutch
Book is available with such betting rates, then we say that there is a 
\textit{Dutch Book for} $b.$

\begin{lemma}
For all $w\in 2^{n}$, $V_{w}(\alpha \vee \beta )=V_{w}(\alpha )+V_{w}(\beta
)-V_{w}(\alpha \wedge \beta )$.\label{lem1}
\end{lemma}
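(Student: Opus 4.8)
The plan is to verify the identity by a direct case analysis over the possible truth values of $\alpha$ and $\beta$ in a fixed world $w$. Since $V_w$ is a homomorphism into the two-element Boolean algebra $\mathbf{2}$, the values $V_w(\alpha)$ and $V_w(\beta)$ each lie in $\{0,1\}$, and the values $V_w(\alpha\vee\beta)$ and $V_w(\alpha\wedge\beta)$ are then determined by the Boolean operations. This gives exactly four cases to check, which is finite and exhaustive for each world $w$.

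First I would fix an arbitrary $w\in 2^n$ and abbreviate $a=V_w(\alpha)$ and $b=V_w(\beta)$, both in $\{0,1\}$. Then I would tabulate the four combinations: when $a=b=0$ we have $V_w(\alpha\vee\beta)=0$ and $V_w(\alpha\wedge\beta)=0$, so both sides equal $0$; when exactly one of $a,b$ equals $1$ we have $V_w(\alpha\vee\beta)=1$ and $V_w(\alpha\wedge\beta)=0$, so the right-hand side is $1+0-0=1$, matching the left; and when $a=b=1$ we have $V_w(\alpha\vee\beta)=1$ and $V_w(\alpha\wedge\beta)=1$, so the right-hand side is $1+1-1=1$, again matching. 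In every case the two sides agree.

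Since the computation holds for the arbitrary $w$ chosen, and since the integers here are just the ordinary arithmetic images of the Boolean values under the identification $\mathbf{2}=\{0,1\}\subseteq R$, the identity holds for all $w\in 2^n$, which is exactly the claim.

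The only point requiring any care, rather than an obstacle, is the implicit passage from the Boolean algebra $\mathbf{2}$ to the reals: the symbols $+$ and $-$ in the statement are genuine real-number operations, while $V_w(\alpha\vee\beta)$ and $V_w(\alpha\wedge\beta)$ are computed via the Boolean join and meet. I would make explicit that we are treating each truth value as the real number $0$ or $1$, so that the Boolean $\vee$ corresponds to $\max$ and $\wedge$ to $\min$, after which the arithmetic is immediate. No genuine difficulty arises; the lemma is the standard inclusion-exclusion relation specialized to a single world.
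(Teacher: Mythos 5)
Your proof is correct and takes exactly the approach the paper intends: the paper's proof simply states that the result follows by ``an easy calculation with truth-tables,'' which is precisely the four-case verification you carry out, and your explicit remark about identifying the Boolean values in $\mathbf{2}$ with the real numbers $0$ and $1$ is a sound clarification of what the paper leaves implicit.
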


\begin{proof}
The proof is an easy calculation with truth-tables and is left to the reader.
\end{proof}

\begin{theorem}
\label{teodb}If $b:F\rightarrow \lbrack 0,1]$ and there is no Dutch Book for 
$b$, then $b$ is a probability function.
\end{theorem}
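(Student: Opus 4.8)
The plan is to prove the contrapositive: assuming $b$ is not a probability function, I would construct a Dutch Book for $b$. Since $b$ fails to be a probability function, at least one of the two defining axioms is violated, and I would treat each violation separately, building in each case a finite set of bets whose combined payoff is strictly negative in every world $w \in 2^n$.

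First I would dispose of a violation of axiom 1. Suppose $\models \alpha$ but $b(\alpha) \neq 1$; since $b(\alpha) \in [0,1]$ this forces $b(\alpha) < 1$. Because $\models \alpha$ means $V_w(\alpha) = 1$ for every $w$, the single bet $(\alpha, b(\alpha), r)$ has payoff $r(1 - b(\alpha))$ independent of $w$. Taking the stake $r < 0$ makes this payoff strictly negative in every world, so $\{(\alpha, b(\alpha), r)\}$ is already a Dutch Book for $b$.

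The main work lies in a violation of axiom 2. Suppose $\models \lnot(\alpha \wedge \beta)$ but $\delta := b(\alpha) + b(\beta) - b(\alpha \vee \beta) \neq 0$. The idea is to bet simultaneously on $\alpha$, $\beta$, and $\alpha \vee \beta$, choosing the stakes so that the world-dependent part of the total payoff cancels. Concretely I would take the three bets $(\alpha, b(\alpha), r)$, $(\beta, b(\beta), r)$, and $(\alpha \vee \beta, b(\alpha \vee \beta), -r)$. Summing their payoffs and grouping the terms, the portion carrying the betting quotients contributes exactly $-r\delta$, while the world-dependent portion is $r[V_w(\alpha) + V_w(\beta) - V_w(\alpha \vee \beta)]$. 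By Lemma \ref{lem1} together with the hypothesis $\models \lnot(\alpha \wedge \beta)$, which gives $V_w(\alpha \wedge \beta) = 0$ for all $w$, this world-dependent portion vanishes identically. Hence the total payoff equals the constant $-r\delta$ in every world.

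The final step is to choose the sign of $r$ so as to force this constant negative: taking $r > 0$ when $\delta > 0$ and $r < 0$ when $\delta < 0$ yields $-r\delta < 0$ uniformly in $w$, so the three bets form a Dutch Book for $b$. Having produced a Dutch Book from each possible failure of the axioms, the contrapositive is complete. I expect the only delicate point to be the bookkeeping in the axiom-2 case: the cancellation of the world-dependent terms is precisely what makes the construction succeed, and it rests essentially on Lemma \ref{lem1} combined with the mutual-exclusivity hypothesis.
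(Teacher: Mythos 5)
Your proof is correct, and the axiom-1 case coincides with the paper's (a single bet on $\alpha$ with a negative stake), but your handling of axiom 2 takes a genuinely different decomposition. The paper never uses the hypothesis $\models\lnot(\alpha\wedge\beta)$ inside the Dutch-Book construction itself: it first establishes two intermediate facts, namely that $\models\lnot\gamma$ forces $b(\gamma)=0$ (a one-bet book), and that the \emph{unconditional} identity $b(\alpha\vee\beta)+b(\alpha\wedge\beta)=b(\alpha)+b(\beta)$ holds for arbitrary pairs of formulas (a four-bet book on $\alpha\vee\beta$, $\alpha\wedge\beta$, $\alpha$, $\beta$ with stakes $1,1,-1,-1$, the world-dependent terms cancelling by Lemma \ref{lem1}), and only then deduces axiom 2 by substituting $b(\alpha\wedge\beta)=0$. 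You instead feed the mutual-exclusivity hypothesis directly into the cancellation: since $V_w(\alpha\wedge\beta)=0$ in every world, the bet on the conjunction can be dropped altogether, and your three-bet book on $\alpha$, $\beta$, $\alpha\vee\beta$ with stakes $r,r,-r$ has constant payoff $-r\delta$, killed by choosing the sign of $r$. Both arguments rest on the same cancellation mechanism from Lemma \ref{lem1}, but yours is leaner: one uniform construction parametrized by $r$ instead of two mirrored case constructions, and no bet on $\alpha\wedge\beta$ at all. What the paper's detour buys is two facts of independent interest --- zero belief for refutable sentences, and full additivity $b(\alpha\vee\beta)+b(\alpha\wedge\beta)=b(\alpha)+b(\beta)$ --- the latter being precisely the shape that axiom 2 takes for partial probability functions in par.~\ref{parparzbets}, so the paper's decomposition also prefigures the structure of the partial Dutch Book theorem, where no analogue of your shortcut is available because the conjunction term need not vanish.
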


\begin{proof}
Axiom 1, if $\models \alpha $ then $b(\alpha )=1$. If $b(\alpha )<1$, then $%
B=\{(a,b(\alpha ),-1)\}$ is a Dutch Book because, for all $w\in 2^{n}$, $%
V_{w}(\alpha )=1$ and then 
\begin{equation*}
\lbrack B](w)=[\alpha ,b(\alpha ),-1](w)=-1(V_{w}(\alpha )-b(\alpha
))=b(\alpha )-1<0.
\end{equation*}
Axiom 2, if $\models \lnot (\alpha \wedge \beta )$ then $b(\alpha \vee \beta
)=b(\alpha )+b(\beta )$. Firstly we prove that 
\begin{equation}
\text{if}\models \lnot \alpha \text{ then }b(\alpha )=0.  \tag{1}  \label{ze}
\end{equation}
If $0<b(\alpha )$, \ then $B=\{(a,b(\alpha ),1)\}$ is a Dutch Book because,
for all $w\in 2^{n}$, $V_{w}(\alpha )=0$ and then $[B](w)=[\alpha
,x,1](w)=1(V_{w}(\alpha )-x)=0-x<0.$

Secondly we show that 
\begin{equation}
b(\alpha \vee \beta )+\beta (\alpha \wedge \beta )=b(\alpha )+b(\beta ). 
\tag{2}  \label{zeze}
\end{equation}
Let $b(\alpha )=x$, $b(\beta )=y$, $b(\alpha $ $\vee \beta )=z$, and $\beta
(\alpha \wedge \beta )=w$. If $z+w>x+y$, we define $B=\{(\alpha \vee \beta
,z,1),(\alpha \wedge \beta ,w,1),(\alpha ,x,-1),(\beta ,y,-1)\}$. We show
that $B$ is a Dutch Book. For all $w\in 2^{n}$, we have 
\begin{eqnarray*}
\lbrack B](w) &=&(V_{w}(\alpha \vee \beta )-z)+(V_{w}(\alpha \wedge \beta
)-w)+(x-V_{w}(\alpha ))+(y-V_{w}(\beta )) \\
&=&(V_{w}(\alpha \vee \beta )+V_{w}(\alpha \wedge \beta )-V_{w}(\alpha
)-V_{w}(\beta ))+(x+y-z-w) \\
&=&x+y-z-w \\
&<&0,
\end{eqnarray*}
because $V_{w}(\alpha \vee \beta )+V_{w}(\alpha \wedge \beta )-V_{w}(\alpha
)-V_{w}(\beta )=0$ by the above lemma. If $z+w<x+y$, just change the sign of
the stakes.

Now we can easily prove axiom 2: if $\models \lnot (\alpha \wedge \beta )$
then $b(\alpha \wedge \beta )=0$, by \ref{ze}, so $b(\alpha \vee \beta
)=b(\alpha )+b(\beta )$ follows from \ref{zeze}.
\end{proof}

\section{Partial subjective probability and Kleene logic\label{parparsub}}

As we have seen in par. \ref{parclassbet}, classic probability can be
understood as a degree of belief in a sentence. We can do the same with
partial probability: we have only to shift from bivalent logic to Kleene
logic and from probability values in $[0,1]$ to probability values in $T$.
This should justify a brief digression in the semantics of Kleene logic.

The language of Kleene \ $n$-ary logic is $L_{n}^{\ast }=$ $L_{n}\cup \{n\}$%
, where $L_{n}$ is the $n$-ary language of classical logic introduced in
par. \ref{parclassbet}. We denote with $F_{n}^{\ast }$ the set of formulas
of $L_{n}^{\ast }$ and with $\mathcal{F}_{n}^{\ast }$ the algebra of $n$-ary
formula. We write simply $L^{\ast }$, $F^{\ast }$ and $\mathcal{F}^{\ast }$
when no confusion is possible. We denote with $K$ the set $\{0,n,1\}$ of
truth-values of Kleene logic, where $n$ stands for `neutral' or `uncertain'.
Every $w\in K^{n}$ cas be seen as an instantaneous description of the world,
at the level of the atomic facts represented by sentential variables, so we
can define a function $V_{w}$ from $\{p_{i}:i\leq n\}$ to $K$ setting $%
V_{w}(p_{i})=w_{i}$. When $w_{i}=n$, the atomic fact represented by $p_{i}$
neither happens nor does not happen. This uncertainty may be of an epistemic
kind, related to a lack of knowledge, or may be deeply rooted in the
reality. The next step is extending $V_{w}$ to all formulas an to this end
we give an algebraic structure to the set of truth-values as follows: we
define on $K$ the total order $0<n<1$ and define $x\wedge y=\min (x,y)$ and $%
x\vee y=\max (x,y)$, what amounts to giving the following truth-tables:

\begin{center}
\begin{tabular}{c|ccc}
$\wedge $ & 1 & 0 & n \\ \hline
1 & 1 & 0 & n \\ 
0 & 0 & 0 & 0 \\ 
n & n & 0 & n
\end{tabular}
\ \ \ \ 
\begin{tabular}{c|ccc}
$\vee $ & 1 & 0 & n \\ \hline
1 & 1 & 1 & 1 \\ 
0 & 1 & 0 & n \\ 
n & 1 & n & n
\end{tabular}
\end{center}

\noindent As for negation, we set $\lnot (n)=n$, $\lnot (0)=1$, $\lnot (1)=0$%
. Finally, we denote with $\mathcal{K}$ the algebra $(K,\wedge ,\vee ,\lnot
,0,1,n)$. As $\mathcal{F}^{\ast }$ is the absolutely free algebra, we can
extend $V_{w}$ to an homomorphism $V_{w}:\mathcal{F}\rightarrow \mathcal{K}$%
. So we say that $\alpha $ is \textit{true in} $w$ iff $V_{w}(\alpha )=1$, 
\textit{false} if $V_{w}(\alpha )=0$ and \textit{neutral} if $V_{w}(\alpha
)=n$. We say that two formulas $\alpha $ and $\beta $ are \textit{equivalent}
and write\textit{\ }$\alpha \equiv \beta $ iff $V_{w}(\alpha
)_{w}=V_{w}(\beta )_{w}$, for all $w\in K^{n}$.

The semantics of Kleene logic can also be defined through a function that
associates to every formula $\alpha $ its meaning $M(\alpha )$ conceived as
a partial set in $D(K^{n})$, where $M(\alpha )_{0}$ and $M(\alpha )_{1}$ are
the set of worlds in which $\alpha $ is respectively true and false, the
positive and the negative models of $\alpha $. Firstly, we define a function 
$M:\{p_{i}:i\leq n\}\rightarrow D(K^{n})$ setting

\begin{equation*}
M(p_{i})=(\{w\in K^{n}:w_{i}=1\},\{w\in K^{n}:w_{i}=0\}).
\end{equation*}
As $\mathcal{F}^{\ast }$ is free, we extend $M$ to a homomorphism $M:%
\mathcal{F}\rightarrow \mathcal{D}(K^{n})$. Then the meanings of formulas
can be recursively defined by the following equations:

\begin{eqnarray*}
M(\alpha \wedge \beta ) &=&M(\alpha )\sqcap M(\beta ), \\
M(\alpha \vee \beta ) &=&M(\alpha )\sqcup M(\beta ), \\
M(\lnot \alpha ) &=&-M(\alpha ), \\
M(0) &=&(\emptyset ,K^{n}), \\
M(1) &=&(K^{n},\emptyset ), \\
M(n) &=&(\emptyset ,\emptyset ).
\end{eqnarray*}

These two ways of giving a semantics are equivalent: if we take the notion
of meaning given by $M$ as primitive, then we can define 
\begin{equation*}
V_{w}(\alpha )=\left\{ 
\begin{array}{ccc}
0 & \mathrm{if} & w\in M(\alpha )_{1}, \\ 
1 & \mathrm{if} & w\in M(\alpha )_{0}, \\ 
n & \text{\textrm{if}} & w\in K^{n}-(M(\alpha )_{0}\cup M(\alpha )_{1});
\end{array}
\right.
\end{equation*}
if we take the notion of truth in the possible world $w$ given by $V_{w}$ as
primitive, then 
\begin{equation*}
M(\alpha )=(\{w\in K^{n}:V_{w}(\alpha )=1\},\{w\in K^{n}:V_{w}(\alpha )=0\}).
\end{equation*}
We define the notion of \textit{logical consequence} as follows: $\alpha
\models \beta $ iff $M(\alpha )\sqsubseteq M(\beta )$ iff $M(\alpha
)_{0}\subseteq M(\beta )_{0}$ and $M(\beta )_{1}\subseteq M(\alpha )_{1}$
iff every positive model of $\alpha $ is a positive model of $\beta $ and
every negative model of $\beta $ is a negative model of $\alpha $. The
notion of logical consequence can be generalized to 
\begin{equation*}
\Gamma \models \alpha \text{ iff }{\sqcap }\{M(\gamma ):\gamma \in \Gamma
\}\sqsubseteq M(\alpha ).
\end{equation*}
In terms of $V_{s}$, the definition runs as follows: 
\begin{equation*}
\Gamma \models \alpha \text{ iff, for all }s\in K^{n}\text{, }\inf
\{V_{s}(\gamma ):\gamma \in \Gamma \}\leq V_{s}(\alpha ).
\end{equation*}
As in the case of classical logic, an easy calculation based on truth-tables
gives the following theorem.

\begin{theorem}
For all $w\in K^{n}$, $V_{w}(\alpha \vee \beta )=V_{w}(\alpha )+V_{w}(\beta
)-V_{w}(\alpha \wedge \beta )$.\label{lem2}
\end{theorem}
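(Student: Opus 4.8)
The plan is to reduce the claimed identity to the elementary fact that, for any two real numbers $x$ and $y$, one has $\max(x,y) + \min(x,y) = x + y$. This holds because whichever of $x,y$ is the larger becomes the maximum and the other becomes the minimum, so the two sides merely list the same two summands in a different order. Reading the Kleene truth-values $0,n,1$ as the real numbers they denote (with $0 < n < 1$), the whole statement then falls out by rearrangement.

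First I would invoke the definition of the algebra $\mathcal{K}$, in which $x \vee y = \max(x,y)$ and $x \wedge y = \min(x,y)$, together with the fact that $V_w$ is a homomorphism into $\mathcal{K}$. This gives at once
\begin{equation*}
V_w(\alpha \vee \beta) = \max(V_w(\alpha), V_w(\beta)), \qquad V_w(\alpha \wedge \beta) = \min(V_w(\alpha), V_w(\beta)).
\end{equation*}
Adding these two equations and applying the real-number identity above with $x = V_w(\alpha)$ and $y = V_w(\beta)$ yields
\begin{equation*}
V_w(\alpha \vee \beta) + V_w(\alpha \wedge \beta) = V_w(\alpha) + V_w(\beta),
\end{equation*}
and subtracting $V_w(\alpha \wedge \beta)$ from both sides gives the asserted formula.

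There is essentially no obstacle here, which is why one may fairly call it ``an easy calculation based on truth-tables'': the identity is purely arithmetic and is insensitive to the numerical value assigned to $n$, so nothing peculiar to the three-valued setting interferes. As a self-contained alternative I would simply tabulate all nine combinations of $V_w(\alpha), V_w(\beta) \in \{0,n,1\}$, compute $V_w(\alpha \vee \beta)$ as the maximum and $V_w(\alpha \wedge \beta)$ as the minimum in each row, and verify the equation row by row; every row balances for any fixed real value of $n$. The one point worth making explicit is the convention that the constant $n$ is being read as a genuine real number in $(0,1)$, so that the sum $V_w(\alpha) + V_w(\beta)$ is meaningful and the identity makes sense as an equation in $R$ rather than merely in the order structure of $K$.
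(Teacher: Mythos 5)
Your proof is correct, but its primary argument takes a genuinely different route from the paper's, which simply declares the result ``an easy calculation based on truth-tables'' and (as with Lemma \ref{lem1}) leaves the nine-row verification to the reader. Instead of enumerating cases, you observe that $\vee$ and $\wedge$ are $\max$ and $\min$ with respect to the total order $0<n<1$, so that $V_w(\alpha\vee\beta)+V_w(\alpha\wedge\beta)=V_w(\alpha)+V_w(\beta)$ because the two sides list the same multiset of summands; the theorem follows by rearrangement. This buys two things. First, it replaces case analysis by a one-line structural identity, $\max(x,y)+\min(x,y)=x+y$, which needs only that the truth-values be totally ordered and admit some addition. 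Second --- and this matters later in the paper --- when Section \ref{parparzbets} redefines $K$ as $\{(0,1),(0,0),(1,0)\}\subseteq R^2$ with componentwise addition and the restriction of $\preceq$ (which is total on these three elements, and under which the new truth-tables are again $\max$ and $\min$), the paper asserts that ``the proof of lemma \ref{lem2} remains obviously unchanged''; your rearrangement argument transfers verbatim to that setting, whereas a raw truth-table check would in principle have to be redone with pairs. One small correction of emphasis: your closing convention that $n$ is ``a genuine real number in $(0,1)$'' is not the paper's. In Section \ref{parparsub} the arithmetic on $K=\{0,n,1\}$ is left uninterpreted, and the paper's eventual resolution is $n=(0,0)\in R^2$, a pair rather than a real scalar. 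Since, as you yourself note, your core identity is insensitive to the value assigned to $n$, nothing breaks, but the remark would be better phrased as requiring only an embedding of the chain $0<n<1$ into a structure with addition, rather than fixing a real value for $n$.
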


Now we can define the notion of partial probability function. As the axioms
of probability function (see par. \ref{parclassbet}) are modeled on
Kolmogoroff's axioms, so the axioms of partial probability function are
modeled on the axioms of measure of partial probability introduced in par. 
\ref{parparprob}. We say that $\pi :F^{\ast }\rightarrow T$ is a \textit{%
partial probability function} if the following axioms are satisfied, where $%
\models $ denotes logical consequence in Kleene logic:

\begin{enumerate}
\item  $1\models \alpha $ implies $\pi (\alpha )=(1,0)$,

\item  $\pi (\alpha \vee \beta )=\pi (\alpha )+\pi (\beta )-\pi (\alpha
\wedge \beta )$,

\item  $\pi (\lnot \alpha )=\sigma (\pi (\alpha ))$,

\item  $n\models \alpha $ implies $(0,0)\preccurlyeq \pi (\alpha ).$
\end{enumerate}

We call $\pi (\alpha )$ the \textit{partial degree of belief} in $\alpha $
of our decision-maker. The following theorem shows some fundamental
properties of $\pi $.

\begin{theorem}
\label{teoprobfun}If $\pi $ is a partial probability function on $%
L_{n}^{\ast }$, then

\begin{enumerate}
\item  $\pi (n)=(0,0).$

\item  $\alpha \models 0$ implies $\pi (\alpha )=(0,1)$,

\item  $\alpha \models n$ implies $\pi (\alpha )\preccurlyeq (0,0)$,

\item  $\pi (\alpha )=\pi (\alpha \vee n)+\pi (\alpha \wedge n)$.
\end{enumerate}
\end{theorem}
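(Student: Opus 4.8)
The plan is to derive all four properties directly from the four axioms of a partial probability function, using only the semantics of the constants $0$, $n$, $1$ recorded above together with two structural facts: that logical consequence $\models$ is the relation $\sqsubseteq$ transported to meanings, and that $\sigma$ swaps the two coordinates while negation reverses $\sqsubseteq$ (since $-(A,B)=(B,A)$ turns $A\subseteq C,\ D\subseteq B$ into $D\subseteq B,\ A\subseteq C$). The reading I would keep in mind throughout is that, because $M(n)=(\emptyset,\emptyset)$, the relation $n\models\alpha$ holds exactly when $\alpha$ has no negative models, and $\alpha\models n$ exactly when $\alpha$ has no positive models.

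I would prove item 1 first, since the other items lean on it. Both $n\models n$ and $n\models\lnot n$ hold, the latter because $M(\lnot n)=-M(n)=(\emptyset,\emptyset)=M(n)$. Applying axiom 4 to each gives $(0,0)\preccurlyeq\pi(n)$ and $(0,0)\preccurlyeq\pi(\lnot n)$; rewriting the second via axiom 3 as $(0,0)\preccurlyeq\sigma(\pi(n))$ and writing $\pi(n)=(a,b)$, the first inequality yields $a\ge 0,\ b\le 0$ while the second, after the coordinate swap, yields $a\le 0,\ b\ge 0$. Antisymmetry of $\preccurlyeq$ then squeezes $\pi(n)=(0,0)$. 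This squeeze, rather than any single axiom, is the crux of the whole theorem.

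Items 2 and 3 are then short, and I would route both through $\lnot\alpha$ so as never to need that $\pi$ respects equivalence. For item 2, $\alpha\models 0$ forces $M(\alpha)\sqsubseteq M(0)=(\emptyset,K^{n})$; since $(\emptyset,K^{n})$ is the bottom of $\mathcal{D}(K^{n})$ this gives $\alpha\equiv 0$, hence $\lnot\alpha\equiv 1$ and $1\models\lnot\alpha$. Axiom 1 gives $\pi(\lnot\alpha)=(1,0)$, and axiom 3 rewrites this as $\sigma(\pi(\alpha))=(1,0)$, whence $\pi(\alpha)=(0,1)$. For item 3, $\alpha\models n$ forces only $M(\alpha)_{0}=\emptyset$; applying the order-reversal of negation to $M(\alpha)\sqsubseteq M(n)$ gives $-M(n)\sqsubseteq -M(\alpha)$, i.e.\ $M(n)\sqsubseteq M(\lnot\alpha)$ since $-M(n)=M(n)$, that is $n\models\lnot\alpha$. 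Axiom 4 then gives $(0,0)\preccurlyeq\pi(\lnot\alpha)=\sigma(\pi(\alpha))$, and unpacking the coordinate swap is verbatim the claim $\pi(\alpha)\preccurlyeq(0,0)$.

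Finally, item 4 is immediate: instantiate axiom 2 at $\beta=n$ to get $\pi(\alpha\vee n)=\pi(\alpha)+\pi(n)-\pi(\alpha\wedge n)$, substitute $\pi(n)=(0,0)$ from item 1, and rearrange the resulting pointwise vector equation. I do not anticipate a genuine obstacle; the only places demanding care are the bookkeeping of which consequences hold — in particular the asymmetry whereby $\alpha\models 0$ collapses to full equivalence $\alpha\equiv 0$ while $\alpha\models n$ only annihilates the positive models — and the antisymmetry squeeze in item 1.
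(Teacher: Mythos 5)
Your proof is correct and takes essentially the same route as the paper's: item 1 by squeezing $\pi(n)$ between $(0,0)\preccurlyeq \pi(n)$ (from $n\models n$ and axiom 4) and $\pi(n)\preccurlyeq (0,0)$ (from $n\models \lnot n$ together with axiom 3), items 2 and 3 by passing to $\lnot\alpha$ and applying axioms 1 and 4 respectively, then unwinding $\sigma$ via axiom 3, and item 4 by instantiating axiom 2 at $\beta=n$ and using item 1. The semantic justifications you add (that $M(0)=(\emptyset,K^{n})$ is the bottom of $\mathcal{D}(K^{n})$, that $-$ reverses $\sqsubseteq$ so $\alpha\models n$ yields $n\models\lnot\alpha$, and the coordinatewise antisymmetry squeeze) are correct glosses on steps the paper asserts without comment.
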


\begin{proof}
1. From $n\models n$ we have $(0,0)\preccurlyeq \pi (n)$, by axiom 4. In
Kleene logic we have $n\models \lnot n$ and so $(0,0)\preccurlyeq \pi (\lnot
n)$. In general we have $(x,y)\preccurlyeq (x^{\prime },y^{\prime })$ iff $%
\sigma (x^{\prime },y^{\prime })\preccurlyeq \sigma (x,y)$, thus $\sigma
(\pi (\lnot (n)))\preccurlyeq (0,0)$. By axiom 3, $\sigma (\pi (\lnot
(n)))=\sigma (\sigma (\pi (n)))=\pi (n)$ holds. Thus $\pi (n)\preccurlyeq
(0,0)$.

2. In Kleene logic $\alpha \models 0$ implies $1\models \lnot \alpha $ and
so, by axiom 1, $\pi (\lnot \alpha )=(1,0)$. Thus, by axiom 3, $\sigma (\pi
(\alpha ))=(1,0)$ e $\pi (\alpha )=(0,1)$.

3. In Kleene logic $\alpha \models n$ implies $n\models \lnot \alpha $, so $%
(0,0)\preccurlyeq \pi (\lnot \alpha )=\sigma (\pi (\alpha ))$ and $\pi
(\alpha )\preccurlyeq (0,0)$.

4. $\pi (\alpha \vee n)=\pi (\alpha )+\pi (n)-\pi (\alpha \wedge n)=\pi
(\alpha )-\pi (\alpha \wedge n)$ by axiom 2 and point 1).
\end{proof}

\section{Partial bets\label{parparzbets}}

A classical bet is a mechanism that receives a triple constituted by a
sentence $\alpha $ in the language $L$ of classical logic, a betting
quotient $x\in \lbrack 0,1]$ and a stake $r\in R$ as input, and gives a
payoff, represented by a real number as output.\ We observe that both
payoffs and stakes belong to $R$, besides, the set of truth-values of
sentences $\{0,1\}$, betting rates $[0,1]$ and \ stakes $R$ are related by $%
\{0,1\}\subseteq $ $[0,1]\subseteq R$. As a result we have been able to
define the payoff \ $[\alpha ,x,r](w)$ of a classical bet in the world $w$
as $r(V_{w}(\alpha )-x)$.

When we consider a partial bet, we begin with a sentence $\alpha $ in the
language $L^{\ast }$ of Kleene and a betting quotient $(x,y)\in T$. The
first step is finding a common ground for truth-values of Kleene logic and
partial probability values, so we make $K\subseteq T$ by giving a new
definition of $K$. So, from now on, we set $K=$ $\{(0,1),(0,0),(1,0)\}$.
After all, the essential nature of truth-values is immaterial, as long as
their formal properties remain unchanged. If we restrict to $K$ the partial
order $\preceq $ defined on $T$, we have $(0,1)\preceq (0,0)\preceq (1,0)$
where $(0,1)$ stands for `false', $(0,0)$ for `neutral' and $(1,0)$ for
`true'. We define the algebra $\mathcal{K}$ of truth values as above. For
reader's convenience we write the new truth-tables:

\begin{center}
\bigskip 
\begin{tabular}{c|ccc}
$\wedge $ & (1,0) & (0,1) & (0,0) \\ \hline
(1,0) & (1,0) & (0,1) & (0,0) \\ 
(0,1) & (0,1) & (0,1) & (0,1) \\ 
(0,0) & (0,0) & (0,1) & (0,0)
\end{tabular}
\ \ \ \ 
\begin{tabular}{c|ccc}
$\vee $ & (1,0) & (0,1) & (0,0) \\ \hline
(1,0) & (1,0) & (1,0) & (1,0) \\ 
(0,1) & (1,0) & (0,1) & (0,0) \\ 
(0,0) & (1,0) & (0,0) & (0,0)
\end{tabular}
\end{center}

\noindent As for $\lnot $, we have $\lnot (0,1)=(1,0)$ and $\lnot
(0,0)=(0,0) $. Finally we get a homomorphism $V_{w}:\mathcal{F}^{\ast }%
\mathcal{\rightarrow K}$ and $V_{w}(\alpha )$ will be the truth-value of $%
\alpha $ in the world $w\in K^{n}$. The proof of lemma \ref{lem2} remains
obviously unchanged.

The second step is finding a common ground for partial probability values
and stakes, so we define a stake as a pair $(h,k)$ in $R^{2}$. The payoff of
a partial bet will be an element of $R^{2}$ too. Stakes and payoffs are
ordered by the same partial order $\preccurlyeq $ defined on partial
probability values. So $R^{2}$ is the common ground for truth-values,
partial probability values, stakes and payoffs

In view of the particular partial ordering of payoffs, some considerations
are in order. (And the same holds for stakes.) A payoff $(h,k)$ has a
positive part $h$ and a negative part $k$. If $h>0$ then $h$ is a reward and
if $h<0$ then $h$ is a loss, \ but on the second elements the order is
reversed, so $k>0$ is a true loss and $k<0$ is a true reward. The positive
and negative parts may come from completely different domains, so we may
think the first value $h$ representing the gain or loss of money and the
second value $k$ representing a degree of physical pain or gratification.
What is essential is that we cannot in principle strike a balance between
the first and the second component of $(h,k)$ and reduce the pair to a
single number.

As $\preceq $ is a partial order, we cannot say in general whether $(h,k)$
is better than $(h^{\prime },k^{\prime })$, but we can make the following
distinctions. We can partition $R^{2}$ in three exhaustive and disjoint
subsets: the diagonal $\delta =\{(x,x):x\in R\}$, the pairs under the
diagonal $\delta ^{+}=$ $\{(x,y):x>y\}$ and the pairs over the diagonal $%
\delta ^{-}=$ $\{(x,y):x<y\}$. The elements of $\delta $ are `neutral
payoffs': if the first component $h$ is positive and so can be seen as a
gain, then the second element $h$ represents a loss of the same intensity.
The situation is reversed when $h$ is negative. The elements of $\delta ^{+}$
give more reward than \ punishment and the elements of $\delta ^{-}$ behave
in the opposite way, so a payoff in $\delta ^{+}$ can be seen as `good' and
\ a payoff in $\delta ^{-}$ as `bad'.

Now we can introduce the concept of \textit{partial bet} as a triple $%
(a,(x,y),(h,k))$ where $\alpha $ is a formula of $L_{n}^{\ast }$, $(x,y)\in
T $, $(h,k)\in R^{2}$. We can describe the payoff of the buyer \ for $%
(a,(x,y),(h,k))$ by the following table,

\begin{center}
\begin{tabular}{ll}
$(h,k)((1,0)-(x,y))=(h(1-x),-ky)$ & if $\alpha $ is true in the actual world
\\ 
$(h,k)((0,0)-(x,y))=(-hx,-ky)$ & if $\alpha $ is neutral in the actual world
\\ 
$(h,k)((0,1)-(x,y))=(-hx,k(1-y))$ & if $\alpha $ is false in the actual
world.
\end{tabular}
\end{center}

\bigskip The net gain varies as the actual world $w$ varies in the set $K^{n}
$ of the possible worlds, so we define a function $[\alpha ,(xy),(h,k)]$:$%
K^{n}\rightarrow R^{2}$ that gives the \textit{payoff of} $(\alpha
,(xy),(h,k))$ \textit{in the world} $w$ by 
\begin{equation*}
\lbrack \alpha ,(xy),(h,k)](w)=(h.k)(V_{w}(\alpha )-(x,y)).
\end{equation*}
We have $[\alpha ,(xy),(h,k)](w)=$ 
\begin{equation*}
\left\{ 
\begin{array}{ll}
(h,k)((1,0)-(x,y))=(h(1-x),-ky) & \text{if }V_{w}(\alpha )=(1,0) \\ 
(h,k)((0,0)-(x,y))=(-hx,-ky) & \text{if }V_{w}(\alpha )=(0,0) \\ 
(h,k)((0,1)-(x,y))=(-hx,k(1-y)) & \text{if }V_{w}(\alpha )=(0,1.)
\end{array}
\right. 
\end{equation*}
As in the case of classical bets, this definition can be naturally extended
to finite sets of bets as follows. If $B=\{(\alpha
_{i},(x_{i},y_{i}),(h_{i},k_{i})):1\leq i\leq n\}$, then we define the net
gain of the set $B$ in the world $w$ as 
\begin{equation*}
\lbrack B](w)=\overset{n}{\underset{i=1}{\sum }}\{[\alpha
_{i},(x_{i},y_{i}),(h_{i},k_{i})]\}.
\end{equation*}
The definition of Dutch Book is slightly different from the classical case.
The payoffs of classical bets are in $R$ with its natural order $\leq $, so $%
0$ is an equilibrium point between loss, negative real numbers, and rewards,
positive natural numbers. The payoffs of partial bets are in $R^{2}$
partially ordered by $\preccurlyeq $, where $\delta $ is a natural
separation between loss, the pairs in $\delta ^{-}$, and reward, the pairs
in $\delta ^{+}$. So we say that a set of partial bets $B$ is a \textit{%
Dutch Book} if, for all $w\in 2^{n}$,$\ [B](w)\in \delta ^{-}$. Intuitively,
a Dutch Book is a set of bets that guarantees more loss than reward for the
buyer. We say that a set of partial bets $B$ is a \textit{Weak Dutch Book}
if, for all $w\in 2^{n}$,$\ [B](w)\in \delta ^{-}\cup \delta $ and there is
at least one possible world $w$ such that $[B](w)\in \delta ^{-}$: so in all
cases we have no gain and in some cases a sure loss. If $B$ is a Dutch Book
then $B$ is obviously a Weak Dutch Book too.

Given a function $b$ from the set \ $F^{\ast }$ of formulas of Kleene logic
to the set $T$ of partial probability values, we say that $B$ is a \textit{%
(Weak) Dutch Book for} $\pi $ if $B$ is a (Weak) Dutch Book and the bets in $%
B$ are of kind $(\alpha _{i},b(\alpha _{i}),(h_{i},k_{i}))$, where the
betting rates are given by $b$. Now we can prove a result similar to theorem 
\ref{teodb}.

\begin{lemma}
For all $(x,y),(z,w)\in T$, if $(y,x)\neq (z,w)$ and $x+z=y+w$, then there
are $h,h^{\prime },k,k^{\prime }\in R$ such that

\begin{enumerate}
\item  $hx+h^{\prime }z=ky+k^{\prime }w$,

\item  $h<k^{\prime }$ and $h^{\prime }<k$.
\end{enumerate}
\end{lemma}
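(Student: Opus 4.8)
The plan is to read condition (1) as a single linear equation in the four unknowns $h,h',k,k'$ and to absorb the two inequalities of (2) into free ``slack'' parameters, so that the only genuine issue is whether the equation can be solved at all. The crucial preliminary observation is that the two hypotheses \emph{together} force $x\neq w$ (equivalently $y\neq z$). Indeed, $x+z=y+w$ can be rewritten as $x-w=y-z$; if this common value were $0$ we would have $x=w$ and $y=z$ simultaneously, hence $(z,w)=(y,x)$, contradicting $(y,x)\neq(z,w)$. So I may set $e:=x-w=y-z\neq 0$. This nonvanishing of $e$ is exactly the foothold that the hypotheses provide.

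Next I would reparametrise to make (2) automatic. Writing $h=k'-s$ and $h'=k-t$, condition (2) becomes simply $s>0$ and $t>0$. Substituting into (1) and using $x-w=e$ and $z-y=-e$, the equation collapses neatly to
\[
e(k'-k)=sx+tz .
\]
Since $e\neq 0$, this is solvable for \emph{any} choice of $s,t>0$ and for $k$ arbitrary: one simply takes $k'=k+(sx+tz)/e$. Thus the whole content of the lemma is that $e\neq 0$, after which the construction is forced.

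Concretely, I would then exhibit a single explicit witness and verify it by direct substitution, e.g. $s=t=1$, $k=0$, $k'=(x+z)/e$, giving $h=(x+z)/e-1$ and $h'=-1$. Condition (2) reduces to $-1<0$ twice, and condition (1) reduces to $(x+z)(x-w)/e=x+z$, which holds because $x-w=e$. This makes the proof self-contained and avoids any appeal to a genericity or hyperplane argument.

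The only real obstacle is the small case analysis establishing $e\neq 0$; everything afterwards is mechanical. It is worth recording that the membership of $(x,y)$ and $(z,w)$ in $T$ is never used: the statement in fact holds for arbitrary reals satisfying $x+z=y+w$ and $(y,x)\neq(z,w)$, so I would not invoke the constraints $x+y\le 1$ or $z+w\le 1$ anywhere in the argument.
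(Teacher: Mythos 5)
Your proof is correct, and although it opens with the same move as the paper --- absorbing the inequalities into positive slacks by writing $h=k'-s$, $h'=k-t$ (the paper sets $k'=q+h$, $k=t+h'$, so its $q$ is your $s$) --- it then takes a genuinely different and cleaner route. The paper treats the slacks as the unknowns, rewrites the equation as $qw+ty=(h-h')(x-w)$, and solves it by a line-with-negative-slope intercept argument; since this divides by $y$ and $w$, the paper is forced into a case split on $y=0$ or $w=0$, and in those degenerate subcases it invokes the nonnegativity of $x,y,z,w$ coming from membership in $T$ (e.g.\ $w=0$ together with $x+z=0$ forces $x=z=0$). You instead leave $s,t>0$ free, isolate the one quantity the hypotheses actually control, namely $e=x-w=y-z\neq 0$ (your derivation of $e\neq 0$ from $(y,x)\neq(z,w)$ is exactly the observation the paper makes only parenthetically inside its Case 1), and solve the single equation $e(k'-k)=sx+tz$ for $k'$, dividing only by $e$. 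This gives a uniform, case-free argument with an explicit witness, which checks out: with $s=t=1$, $k=0$, $k'=(x+z)/e$, $h=k'-1$, $h'=-1$, condition (2) is immediate and condition (1) reduces to $(x+z)(x-w)/e=x+z$, true since $x-w=e$. Your closing remark is also correct and is a genuine strengthening the paper does not record: your argument never uses $(x,y),(z,w)\in T$, so the lemma holds for arbitrary reals satisfying $x+z=y+w$ and $(y,x)\neq(z,w)$, whereas the paper's subcase a) does lean on the nonnegativity constraints. In short: same initial reduction, but a smarter choice of which variables to solve for, buying you uniformity, an explicit solution, and greater generality.
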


\begin{proof}
Firstly, we observe that $x$, $y$, $z$ and $w$ all belong to $[0,1]$ because 
$(x,y)$ and $(z,w)$ belong to $T$. Secondly, if we set $k^{\prime }=q+h$ and 
$k=t+h^{\prime }$, then we reduce ourselves to prove that there are $%
q,t,h,h^{\prime }$, with $q,t>0$ such that 
\begin{equation}
hx+h^{\prime }z=(t+h^{\prime })y+(q+h)w.  \tag{1}  \label{one}
\end{equation}
Now we distinguish two cases.

Case1, $0<y,w$. The \ above equation can be rewritten as $%
qw+ty=h(x-w)-h^{\prime }(y-z)$. From our hypothesis $x+z=y+w$ we have $%
x-w=y-z$, so we obtain $qw+ty=h(x-w)-h^{\prime }(x-w)$ that can also be
written as 
\begin{equation}
q=-\frac{y}{w}t+\frac{(h-h^{\prime })(x-w)}{w},  \tag{2}  \label{two}
\end{equation}
a linear equation in two unknowns $q,t$ and parameters $h,h^{\prime },w,x,y$%
, with $w,y\neq 0$.This equation can be plotted as a line with negative
slope, as $t$ varies on the horizontal axis and $q$ on the vertical axis. We
can choose $h$ and $h^{\prime }$ such that the set of pairs $(t,q)$
satisfying (\ref{one}), with $t,q>0$, is not empty. To this scope we make
the $t$-intercept positive and the $q$-intercept positive, i.e. 
\begin{equation*}
t=\frac{(h-h^{\prime })(x-w)}{y}>0\text{ and }q=\frac{(h-h^{\prime })(x-w)}{w%
}>0,
\end{equation*}
by choosing : i) $h>h^{\prime }$ if $x>w$, ii) $h^{\prime }>h$ if $w>x$.
(Remember that $w,y\neq 0$.) With such a choice of $h$ and $h^{\prime }$,
any pair $(t,q)$ such that (\ref{two}) holds and 
\begin{equation*}
0\leq t\leq \frac{(h-h^{\prime })(x-w)}{y}\text{ and }0\leq q\leq \frac{%
(h-h^{\prime })(x-w)}{w}
\end{equation*}
gives a solution. (The case $x=w$ is impossible, because from $x=w$ and the
hypothesis $x+z=y+w$ we have $y=z$, that is contrary to our hypothesis $%
(y,x)\neq (z,w)$.)

Case 2, \ $y=0$, or $w=0$.

Subcase a), $y=0$. Firstly we prove that $w\neq 0$ and $z\neq 0$. If $w=0$
then, from our hypothesis $x+z=y+w$ we have $x+z=0$ and so $x=z=0$. Then $%
x,y,z,w$ are all $0$ and an absurd follows our hypothesis $(y,x)\neq (z,w)$.
If $z=0$ then $(x,y)=(x,0)$ and $(z,w)=(0,w)$, so from our hypothesis $%
(y,x)\neq (z,w)$ we have $x\neq w$. But $x+z=y+w$ holds by hypothesis, so $%
x+0=0+w$ and $x=w$, that is absurd. Now equation (\ref{one}) reduces $%
hx+h^{\prime }z=(q+h)w$, because $y=0$ by hypothesis, that can be rewritten
as 
\begin{equation*}
h(x-w)+h^{\prime }z=qw.
\end{equation*}
By hypothesis, $x+z=y+w=w$, so $x-w=-z$, so the above equation becomes \ $%
-hz+h^{\prime }z=qw$, that can be rewritten as 
\begin{equation*}
q=\frac{(h^{\prime }-h)z}{w},
\end{equation*}
where $w>0$ and $z>0$, so we get $q>0$ by choosing $h^{\prime }>h$.

Subcase b), $w=0$. The proof is similar to subcase a).
\end{proof}

\begin{theorem}
If $b:F\rightarrow T$ and there is no Weak Dutch Book for, then $b$ is a
partial probability function.
\end{theorem}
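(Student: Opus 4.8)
The plan is to establish the four axioms of a partial probability function by contraposition: assuming an axiom fails, I would exhibit a Weak Dutch Book for $b$. The two structural facts I would lean on throughout are that the payoff $[\alpha,(x,y),(h,k)](w)=(h,k)(V_w(\alpha)-(x,y))$ is a componentwise product (hence linear in the stake), and that a payoff $(D_1,D_2)$ realised in \emph{every} world is a full Dutch Book exactly when $D_1<D_2$. The key instrument is the coordinatewise identity of Theorem \ref{lem2}, which lets me cancel all world-dependence whenever I bet simultaneously on $\alpha\vee\beta$, $\alpha\wedge\beta$, $\alpha$ and $\beta$.

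Axioms 1 and 4 I would dispatch with single bets. If $1\models\alpha$ then $V_w(\alpha)=(1,0)$ in every world; writing $b(\alpha)=(x,y)$, a failure $(x,y)\neq(1,0)$ means $x<1$ or $y>0$, and the bets $(\alpha,(x,y),(-1,0))$ resp.\ $(\alpha,(x,y),(0,-1))$ give the constant payoffs $(x-1,0)$ resp.\ $(0,y)$, both in $\delta^{-}$. If instead $n\models\alpha$, then $\alpha$ is never false, so every world is true or neutral; were $b(\alpha)=(x,y)$ to have $y>0$, the single bet $(\alpha,(x,y),(0,-1))$ would pay $(0,y)\in\delta^{-}$ in both the true and the neutral case, a Dutch Book, forcing $y=0$, i.e.\ $(0,0)\preccurlyeq b(\alpha)$.

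For Axiom 2 I would bet on all four formulas with the stake pattern $(1,1),(1,1),(-1,-1),(-1,-1)$. By Theorem \ref{lem2} in each coordinate, the world-dependent part cancels and the total payoff reduces to the constant
\begin{equation*}
\bigl((x+z)-(p+r),\,(y+w)-(q+s)\bigr),
\end{equation*}
where $(x,y),(z,w),(p,q),(r,s)$ are the $b$-values of $\alpha,\beta,\alpha\vee\beta,\alpha\wedge\beta$. To force each coordinate of this constant to vanish I would isolate the coordinates: setting every $k_i=0$ keeps only the first coordinate active, so flipping the signs of the $h_i$ shows the first discrepancy is simultaneously $\le 0$ and $\ge 0$, hence $0$; setting every $h_i=0$ does the same for the second. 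This yields $p+r=x+z$ and $q+s=y+w$, which is exactly $b(\alpha\vee\beta)=b(\alpha)+b(\beta)-b(\alpha\wedge\beta)$.

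The hard part is Axiom 3, and here the preceding lemma is the whole point. Writing $(x,y)=b(\alpha)$, $(z,w)=b(\lnot\alpha)$ and recalling that $V_w(\lnot\alpha)$ swaps true and false while fixing neutral, I would first bet $(\alpha,(x,y),(1,1))$ together with $(\lnot\alpha,(z,w),(1,1))$: the true and false worlds then both pay $(1-(x+z),1-(y+w))$ and the neutral world pays $(-(x+z),-(y+w))$, so the book sits in $\delta^{-}$ precisely when $x+z>y+w$; a sign flip gives the reverse inequality, pinning down $x+z=y+w$. Now suppose Axiom 3 fails, $(z,w)\neq(y,x)$. If $\alpha\equiv n$ the single-bet argument already forces $b(\alpha)=b(\lnot\alpha)=(0,0)=\sigma(0,0)$, so Axiom 3 holds and this case does not arise; otherwise $\alpha$ is true or false in some world, and the hypotheses $(y,x)\neq(z,w)$, $x+z=y+w$ of the lemma hold, yielding $h,h',k,k'$ with $hx+h'z=ky+k'w=:C$, $h<k'$ and $h'<k$. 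Betting $(\alpha,(x,y),(h,k))$ and $(\lnot\alpha,(z,w),(h',k'))$ then produces, via the common value $C$, the payoffs $(h-C,k'-C)$ in true worlds, $(h'-C,k-C)$ in false worlds and $(-C,-C)$ in neutral worlds; the first two lie in $\delta^{-}$ by $h<k'$ and $h'<k$, the last in $\delta$, so the book is a Weak Dutch Book and the contradiction forces $(z,w)=(y,x)$. The genuine obstacle is this last argument: one must secure the linear constraint $x+z=y+w$ \emph{before} the lemma applies, then verify that the resulting book is Weak (true and false worlds strictly bad, neutral world merely even) rather than full, while separately disposing of the always-neutral formula.
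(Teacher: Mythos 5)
Your proposal is correct and follows the paper's overall strategy---contraposition via explicitly constructed books, single bets for Axioms 1 and 4, the four-formula book with Theorem \ref{lem2} for Axiom 2, and the lemma producing stakes $h,h',k,k'$ for the balanced case of Axiom 3---but it deviates in two places, and both deviations close genuine lacunae in the paper's own argument. First, for Axiom 2 the paper splits only into the two cases $b(\alpha)+b(\beta)\prec b(\alpha\vee\beta)+b(\alpha\wedge\beta)$ and its reverse; since $\preceq$ is merely a partial order, this misses discrepancies incomparable with $(0,0)$ (e.g.\ a discrepancy $(1,1)$, which arises for $b(\alpha)=b(\beta)=(1/2,1/2)$ and $b(\alpha\vee\beta)=b(\alpha\wedge\beta)=(0,0)$). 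Your coordinate-isolation device---stakes $(\pm 1,0)$ forcing the first component of the discrepancy to zero, then $(0,\pm 1)$ for the second---covers all cases uniformly and is strictly more complete than the paper's two-case split. Second, in the balanced case $x+z=y+w$ of Axiom 3 the paper's book has payoff in $\delta^-\cup\delta$ and is declared a Weak Dutch Book, but when $\alpha$ is neutral in every world (e.g.\ $\alpha\equiv n$) every payoff lands in $\delta$ and the book is not Weak, so the paper's proof silently passes over exactly the case you isolate; you dispose of it by forcing $b(\alpha)=b(\lnot\alpha)=(0,0)$ with single bets, which works, though for the never-true direction you need the stake $(1,0)$ (payoff $(-x,0)\in\delta^-$ when $x>0$), the dual of the $(0,-1)$ bet you listed, rather than those two bets alone. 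Two cosmetic confirmations: your neutral-world payoff $(-C,-C)$ is the correct computation (the paper's displayed $(hx+h'z,ky+k'w)$ has a sign slip, harmless since both lie in $\delta$), and you correctly use the lemma's condition $h'<k$ where the paper's application misprints it as $h'>k$.
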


\begin{proof}
We prove that $b$ satisfies the four axioms of partial probability function.

Axiom 1. We must prove that $1\models \alpha $ implies $b(\alpha )=(1,0)$,
so we suppose $b(\alpha )=(x,y)\neq (1,0)$ and set $B=\{(\alpha
,(x,y),(-1,-11))\}$. We have, for all $w\in K^{n}$, 
\begin{equation*}
\lbrack B](w)=(-1,-1)(V_{w}(\alpha )-(x,y))=(-1,-1)((1,0)-(x,y))=(x-1,y).
\end{equation*}
From $(x,y)\neq (1,0)$ we have $x<1$. (As $(x,y)\in T$, $(x,y)\neq (1,0)$
means $(x,y)\prec (1,0)$,  so $x<1$.) So we have $x-1<0$ and $0\leq y$ and
then $(x-1,y)\prec (0,0)$. So $[B](w)\in \delta ^{-}$ for all $w\in K^{n}$%
and $B$ is a Dutch Book for $b$.

Axiom 2. We must prove that $b(\alpha \vee \beta )=b(\alpha )+b(\beta
)-b(\alpha \wedge \beta )$.

Case 1:  $b(\alpha )+b(\beta )\prec b(\alpha \vee \beta )+b(\alpha \wedge
\beta )$. We set 
\begin{eqnarray*}
B &=&\{(\alpha \vee \beta ,b(\alpha \vee \beta ),(1,1)),((\alpha \wedge
\beta ,b(\alpha \wedge \beta ),(1,1)), \\
&&(\alpha ,b(\alpha ),(-1,-1)),(\beta ,b(\beta ),(-1,-1))\}.
\end{eqnarray*}

Then we have, for all $w\in K^{n}$, 
\begin{eqnarray*}
\lbrack B](w) &=&(1,1)(V_{w}(\alpha \vee \beta )-b(\alpha \vee \beta
))+(1,1)(V_{w}(\alpha \wedge \beta )-b(\alpha \wedge \beta ))+ \\
&&(-1,-1)(V_{w}(\alpha )-b(\alpha ))+(-1,-1)(V_{w}(\beta )-b(\beta )) \\
&=&V_{w}(\alpha \vee \beta )-b(\alpha \vee \beta )+V_{w}(\alpha \wedge \beta
)-b(\alpha \wedge \beta )+b(\alpha )- \\
&&V_{w}(\alpha )+b(\beta )-V_{w}(\beta ) \\
&=&(V_{w}(\alpha \vee \beta )+V_{w}(\alpha \wedge \beta )-V_{w}(\alpha
)-V_{w}(\beta ))+ \\
&&(b(\alpha )+b(\beta )-b(\alpha \vee \beta )-b(\alpha \wedge \beta )) \\
&=&b(\alpha )+b(\beta )-b(\alpha \vee \beta )-b(\alpha \wedge \beta ) \\
&\prec &(0,0)
\end{eqnarray*}
where the next to last line follows from lemma \ref{lem2} and the last line
from our hypothesis. As $[B](w)\in \delta ^{-}$ for all $w\in K^{n}$, $B$ is
a Dutch Book for $b$.

Case 2: $b(\alpha \vee \beta )+b(\alpha \wedge \beta )\prec b(\alpha
)+b(\beta )$. We set 
\begin{eqnarray*}
B &=&\{(\alpha \vee \beta ,b(\alpha \vee \beta ),(-1,-1)),((\alpha \wedge
\beta ,b(\alpha \wedge \beta ),(-1,-1)), \\
&&(\alpha ,b(\alpha ),(1,1)),(\beta ,b(\beta ),(1,1))\}
\end{eqnarray*}
and we get $[B](w)=b(\alpha \vee \beta )+b(\alpha \wedge \beta )-b(\alpha
)-b(\beta )\prec (0,0)$, so $[B](w)\in \delta ^{-}$.

Axiom 3. We must prove that $b(\lnot \alpha )=\sigma (b(\alpha ))$, so we
suppose that $b(\lnot \alpha )\neq \sigma (b(\alpha ))$. Let $b(a)=(x,y)$
and $b(\lnot \alpha )=(z,w)$, where $(z,w)\neq (y,x)=\sigma (b(\alpha ))$.
We distinguish three cases.

Case 1, $x+z<y+w$. If we set 
\[
B=\{(\alpha ,(x,y),(-1,-1)),(\lnot \alpha
,(z,w),(-1,-1))\},
\]
then we have, for all $w\in K^{n}$, 
\begin{eqnarray*}
\lbrack B](w) &=&(-1,-1)(V_{w}(\alpha )-(x,y))+(-1,-1)(V_{w}(\lnot \alpha
)-(z,w)) \\
&=&(-1,-1)(V_{w}(\alpha )+\sigma (V_{w}(\alpha ))-(x+z,y+w)).
\end{eqnarray*}
We note that 
\begin{equation*}
V_{w}(\alpha )+\sigma (V_{w}(\alpha ))=\left\{ 
\begin{array}{ll}
(1,1) & \text{if }V_{w}(\alpha )=(1,0)\text{ or }V_{w}(\alpha )=(0,1)\text{,}
\\ 
(0,0) & \text{if }V_{w}(\alpha )=(0,0)\text{,}
\end{array}
\right. 
\end{equation*}
so we have 
\begin{equation*}
\lbrack B](w)=\left\{ 
\begin{array}{l}
(-1,-1)((1,1)-(x+z,y+w))=((x+z)-1,(y+w)-1)\text{,} \\ 
(-1,-1)((0,0)-(x+z,y+w))=(x+z,y+w)\text{.}
\end{array}
\right. 
\end{equation*}
By hypothesis $x+z<y+w$, so in both cases $[B](w)\in \delta ^{-}$ and $B$ is
a Dutch Book.

Case 2, $y+w<x+z$. We take $B$ as in case 1, but change the stake $(-1,-1)$
in $(1,1)$, then we have 
\begin{equation*}
\lbrack B](w)=\left\{ 
\begin{array}{l}
(1-(x+z),1-(y+w)\text{,} \\ 
(-(x+z),-(y+w))\text{.}
\end{array}
\right. 
\end{equation*}
In both cases we have a point in $\delta ^{-}$ because, by hypothesis, $%
-(x+z)<-(y+w)$.

Case 3, $x+z=y+w$. We show that there are stakes $(h,k)$ and $(h^{\prime
},k^{\prime })$ such that, setting 
\begin{equation*}
B=\{(\alpha ,(x,y),(h,k)),(\lnot \alpha ,(z,w),(h^{\prime },k^{\prime }))\},
\end{equation*}
we have $[B](w)\in \delta ^{-}$ for all $w\in K^{n}$. Firstly we observe
that 
\begin{eqnarray*}
\lbrack B](w) &=&(h,k)(V_{w}(\alpha )-(x,y))+(h^{\prime },k^{\prime
})(V_{w}(\lnot \alpha )-(z,w)) \\
&=&\left\{ 
\begin{array}{l}
(h,k)((1,0)-(x,y))+(h^{\prime },k^{\prime })((0,1)-(z,w)) \\ 
(h,k)((0,0)-(x,y))+(h^{\prime },k^{\prime })((0,0)-(z,w)) \\ 
(h,k)((0,1)-(x,y))+(h^{\prime },k^{\prime })((1,0)-(z,w))
\end{array}
\right.  \\
&=&\left\{ 
\begin{array}{l}
(h(1-x),-ky)+(-h^{\prime }z,k^{\prime }(1-w)) \\ 
(-hx,-ky)+(-h^{\prime }z,-k^{\prime }w) \\ 
(-hx,k(1-y))+(h^{\prime }(1-z),-k^{\prime }w)
\end{array}
\right.  \\
&=&\left\{ 
\begin{array}{l}
(h-(hx+h^{\prime }z),k^{\prime }-(ky+k^{\prime }w)) \\ 
(hx+h^{\prime }z,ky+k^{\prime }w) \\ 
(h^{\prime }-(hx+h^{\prime }z),k-(ky+k^{\prime }w)).
\end{array}
\right. 
\end{eqnarray*}
Then, by the above lemma, we can choose $h,h,h^{\prime },k^{\prime }$ such
that: i) $hx+h^{\prime }z=ky+k^{\prime }w$, and ii) $h<k^{\prime }$ and $%
h^{\prime }>k$. So in the first and third case we have a point in $\delta
^{-}$ and in the second case we have a point in $\delta $. This proves that $%
\ [B](w)\in \delta ^{-}\cup \delta $ and $B$ is a Weak Dutch Book.

Axiom 4. We must prove that $n\models \alpha $ implies $(0,0)\preccurlyeq
b(\alpha )$. We suppose that $b(\alpha )=(x,y)$ and $(0,0)\npreceq (x,y)$.
We set $B=\{(a,(x.y),(0,-1))\}$. From the hypothesis $n\models \alpha $ we
know that, for all $w\in K^{n}$, $V_{w}(n)\leq V_{w}(\alpha )$. As $%
V_{w}(n)=(0,0)$, we have $V_{w}(\alpha )=(0,0)$ or $V_{w}(\alpha )=(1,0)$.
Then, for all $w\in K^{n}$, 
\begin{equation*}
\lbrack B](w)=(0,1)(V_{w}(\alpha )-(x,y))=\left\{ 
\begin{array}{ll}
(0,-1)((1,0)-(x,y)) & \text{if }V_{w}(\alpha )=(1,0)\text{,} \\ 
(0,-1)((0,0)-(x,y)) & \text{if }V_{w}(\alpha )=(0,0)\text{.}
\end{array}
\right. 
\end{equation*}
In both cases we have $[B](w)=(0,y)$. As $(x,y)$ is a partial probability
value, $0\leq y\leq 1$; if $y=0$ then $(0,0)\preccurlyeq (x,y)$, but by
hypothesis $(0,0)\npreceq (x,y)$, so $0<y$ . Then we have $(0,y)\prec (0,0)$%
, so $[B](w)\in \delta ^{-}$ and $B$ is a Dutch Book.
\end{proof}

\section{Equivalence}

We conclude our work with some remarks about the concept of bet. We start
with classical bets. If $b:F\rightarrow \lbrack 0,1]$ is to represent the
degree of belief of a decision-maker, logically equivalent formulas of
classical logic should receive the same probability value. We can show that
this is the case, when there is no Dutch Book for $b$. We suppose that $%
\alpha \equiv \beta $ in classical logic and $b(\alpha )\neq b(\beta )$. If $%
b(\beta )<b(\alpha )$ we set $B=\{(\alpha ,b(\alpha ),1),(\beta ,b(\beta
),-1)\}$, so we have $[B](w)=V_{w}(\alpha )-V_{w}(\beta )+b(\beta )-b(\alpha
)=b(\beta )-b(\alpha )<0$ and $B$ is a Dutch Book for $b$. If $b(\alpha
)<b(\beta )$ we choose a stake $-1$ for the bet on $\alpha $ and a stake $1$
for the bet on $\beta $.

From this observation we can see that the concept of bet introduced above is
substantially equivalent to another that is common in the literature (see,
for instance, \cite{halpern2003} p. 20-23). In fact, we can define a bet as
a triple $(U,x,r)$, where $U\subseteq 2^{n}$, $x\in \lbrack 0,1]$ and $r\in
R $, so in this way we are betting on an event, a set of possible world $U$,
instead of betting on a formula $\alpha $. If we denote with $\mathcal{B}$
the set of all bets of kind $(\alpha ,x,r)$ and define a relation $(\alpha
,x,r)\sim (\beta ,x,r)$ iff $\alpha \equiv \beta $, then $\sim $ is an
equivalence relation and we can take the quotient $\mathcal{B}/\sim $. Now
there is a bijection $f$ between $\mathcal{B}/\sim $ and the set of all bets
of kind $(U,x,r)$: just set $f(|(\alpha ,x,r)|)=(M(\alpha ),x,r)$, where $%
M(\alpha )$ denotes the set of possible worlds that is the meaning of $%
\alpha $, as defined in \ref{parclassbet}.\ The function $f$ is well defined
because if $(\alpha ,x,r)\sim (\beta ,x,r)$ then $\alpha \equiv \beta $ and $%
M(\alpha )=M(\beta )$. The function is obviously injective and is surjective
because every $U\subseteq 2^{n}$ is $M(\alpha )$ for some $\alpha $. (This
follows from the theorem on disjunctive normal form of classical logic.) As
a consequence of the observation above, all bets in the class $|(\alpha
,x,r)|$ behave in the same way with respect to $b$, when no Dutch Book is
possible for $b$, so we can attach the probability value $b(\alpha )$ to the
class $|(\alpha ,x,r)|$ and this value can be transferred to $M(\alpha )$.

The same problem can be posed for partial bets: if $b:F^{\ast }\rightarrow T$
is to represent the \ partial degree of belief of a decision-maker,
logically equivalent formulas of Kleene logic should receive the same
partial probability value. The following theorem proves that this is the
case when there is no Dutch Book for $b$.

\begin{theorem}
Let $b:F^{\ast }\rightarrow T$ and suppose that there is no Dutch Book for $b
$. If $\alpha \equiv \beta $ in Kleene logic then $b(\alpha )=b(\beta )$.
\end{theorem}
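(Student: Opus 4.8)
The plan is to mimic the classical equivalence argument from the opening of section 5, but to face the complication that partial probability values are only partially ordered, so a single bet with stake $(1,1)$ or $(-1,-1)$ need not produce a payoff lying in $\delta^-$. I would argue by contraposition: assume $\alpha\equiv\beta$ in Kleene logic but $b(\alpha)\neq b(\beta)$, and construct a Dutch Book for $b$. Write $b(\alpha)=(x,y)$ and $b(\beta)=(z,w)$ with $(x,y)\neq(z,w)$. The first observation I would record is that $\alpha\equiv\beta$ means $V_s(\alpha)=V_s(\beta)$ for all $s\in K^n$, so in the payoff of any two-bet combination on $\alpha$ and $\beta$ the truth-value terms cancel identically, leaving a constant (world-independent) payoff. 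This is the exact analogue of the cancellation $V_w(\alpha)-V_w(\beta)=0$ in the classical case, and it reduces the whole problem to choosing stakes that force that constant into $\delta^-$.

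\medskip
\noindent Concretely, I would set $B=\{(\alpha,(x,y),(h,k)),(\beta,(z,w),(h',k'))\}$ and compute, using $V_s(\alpha)=V_s(\beta)$,
\begin{equation*}
[B](s)=(h,k)(V_s(\alpha)-(x,y))+(h',k')(V_s(\alpha)-(z,w)).
\end{equation*}
To kill the dependence on $V_s(\alpha)$ I would take $h'=-h$ and $k'=-k$, which makes the truth-value contributions $(h,k)V_s(\alpha)+(h',k')V_s(\alpha)=(0,0)$ cancel, leaving the constant payoff
\begin{equation*}
[B](s)=(h,k)(-(x,y))+(-h,-k)(-(z,w))=\bigl(h(z-x),\,k(w-y)\bigr).
\end{equation*}
Then $B$ is a Dutch Book exactly when $h(z-x)<k(w-y)$, i.e. when the payoff lands in $\delta^-$ independently of the world. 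So the task is to choose $h,k\in R$ realizing this strict inequality, given only that $(x,y)\neq(z,w)$.

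\medskip
\noindent Here is where the partial order forces a case split, and this is the step I expect to be the main obstacle. Since $(x,y)\neq(z,w)$, at least one of $z-x$ and $w-y$ is nonzero. If $z-x\neq 0$ I can make $h(z-x)$ as large-negative as I please while taking $k=0$ (so the second component is $0$, placing the point strictly below the diagonal whenever the first component is negative); symmetrically if $w-y\neq 0$ I take $h=0$ and choose $k$ so that $k(w-y)>0$. In either case a suitable sign choice for the single nonzero stake sends $[B](s)$ into $\delta^-$ for every $s$, giving a genuine Dutch Book and contradicting the hypothesis. The delicate point is simply to verify that one of the two coordinates can be driven strictly the right way without the other coordinate spoiling membership in $\delta^-$; the freedom to set the irrelevant stake component to $0$ is what makes this routine. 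This yields $b(\alpha)=b(\beta)$ and completes the proof.
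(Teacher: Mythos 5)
Your proof is correct, and it shares the paper's central mechanism: argue by contraposition, place opposite stakes $(h,k)$ on $\alpha$ and $(-h,-k)$ on $\beta$, use the fact that $\alpha\equiv\beta$ in Kleene logic means $V_s(\alpha)=V_s(\beta)$ for every $s\in K^n$ to cancel all world-dependence, and reduce everything to the constant payoff $\bigl(h(z-x),\,k(w-y)\bigr)$, so that a Dutch Book amounts to the single inequality $h(z-x)<k(w-y)$. Where you genuinely diverge is in the endgame of choosing stakes. The paper implicitly restricts itself to stake components in $\{1,-1\}$ and consequently needs three cases comparing $z-x$ with $w-y$: the cases $z-x<w-y$ and $w-y<z-x$ with stakes $(\pm1,\pm1)$, plus the diagonal case $z-x=w-y$, which requires the mixed-sign stakes $(1,-1)$ and $(-1,1)$ together with a preliminary argument that $z-x\neq0$ there. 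Your observation that one stake component may simply be set to $0$ collapses this into two symmetric cases: if $z-x\neq0$, take $k=0$ and $h$ with $h(z-x)<0$, giving $(h(z-x),0)\in\delta^{-}$; otherwise $w-y\neq0$, take $h=0$ and $k$ with $k(w-y)>0$, giving $(0,k(w-y))\in\delta^{-}$. This is a cleaner and more uniform case analysis at no cost in generality, since the paper places no sign or nonvanishing restrictions on stakes, and it yields a genuine (strong) Dutch Book, which is exactly what the hypothesis forbids. One terminological slip worth fixing: a point with negative first component and zero second component lies in $\delta^{-}$, which in the paper's convention is the set of pairs \emph{over} the diagonal ($x<y$), not below it; your actual membership claim $[B](s)\in\delta^{-}$ is nevertheless the correct one and is all the Dutch Book definition requires.
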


\begin{proof}
We assume that $\alpha \equiv \beta $ and $b(\alpha )=(x,y)\neq
(z,w)=b(\beta )$, then we show that there is a Dutch Book for $b$.

Case 1, $z-x<w-y$. We set $B=\{(\alpha ,(x,y),(1,1)),(\beta ,(z,w),(-1,-1))\}
$ Then 
\begin{eqnarray*}
\lbrack B](w) &=&(1,1)(V_{w}(\alpha )-(x,y))+(-1,-1)(V_{w}(\beta )-(z,w)) \\
&=&(1,1)V_{w}(\alpha )-(1,1)(x,y))+(-1,-1)V_{w}(\beta )-(-1,-1)(z,w) \\
&=&-(x,y)-(-z,-w) \\
&=&(z-x,w-y).
\end{eqnarray*}
As $(z-x,w-y)\prec (0,0)$, we have $[B](w)\in \delta ^{-}$ and $B$ is a
Dutch Book.

Case 2, $w-y<z-x$. We set $B=\{(\alpha ,(x,y),(-1,-1)),(\beta ,(z,w),(1,1))\}
$ so that $[B](w)=(x-z,y-w)\in \delta ^{-}$.

Case 3, $z-x=w-y$. Firstly we observe that $z-x\neq 0$. (If $z-x=0$ then $%
w-y=0$ and $x$, $y$, $z$ and $w$ are all $0$, that is contrary to our
hypothesis $(x,y)\neq (z,w)$.) Then we distinguish two cases.

Subcase a), $z-x<0$. Then $w-y<0$ and $y-w>0$, so $z-x<y-w$ and $($ $%
z-x<y-w)\in \delta ^{-}$. We obtain a Dutch Book choosing $B=\{(\alpha
,(x,y),(1,-1)),(\beta ,(z,w),(-1,1))\}$, because in this case $%
[B](w)=(z-x,y-w)$.

Subcase b), $0<z-x$. We set $B=\{(\alpha ,(x,y),(-1,1)),(\beta
,(z,w),(1,-1))\}$.
\end{proof}

Now we can take \ equivalence classes of partial bets, as we did in the case
of classical bets. As all bets in the class $|(\alpha ,x,r)|$ behave in the
same way with respect to $b$, when no Dutch Book is possible for $b$, we can
attach the partial probability value $b(\alpha )$ to the class $|(\alpha
,(x,x^{\prime }),(r,r^{\prime }))|$, but here the analogy breaks off. \ If
we define a concept of bet where the formula $\alpha $ is replaced by a
partial set of possible worlds, i.e. we define a bet as a triple $%
((U,U^{\prime }),(x,x^{\prime }),(r,r^{\prime }))$, where $(U,U^{\prime
})\subseteq \mathcal{D}(K^{n})$, $(x,x^{\prime })\in T$ and $(r,r^{\prime
})\in R^{2}$, we obtain a more comprehensive concept of partial bet: there
are bets on partial events that cannot be simulated by a bet on a sentence.
In fact, we may define a function $f(|(\alpha ,(x,x^{\prime }),(r,r^{\prime
})|)=(M(\alpha ),(x,x^{\prime }),(r,r^{\prime }))$ as above, where $M(\alpha
)$ is the partial set of positive and negative models of $\alpha $ defined
in \ref{parparsub}, but now $f$ is strictly iniective, because there are
partial sets in $\mathcal{D}(K^{n})$ that cannot be denoted by a formula $%
\alpha $. We end this work with a proof of this theorem.

The elements of $K$, that have been up to now understood as truth values,
may also be conceived as values assigned to the amount of information
carried by a proposition, to the degree of exactness of an assertion. (In
this section we take the original definition of $K$ as $\{0,n,1\}$.) To this
end, we define on $K$ a partial order $\trianglelefteq $ setting $%
x\trianglelefteq y$ iff $x=y$ or ($x=n$ and $y=0$) or ($x=n$ and $y=1$).
This partial order can be pointwise extended to $K^{n}$, so given $s$, $t\in
K^{n}$, $s\trianglelefteq t$ iff for all $i<n$, $s_{i}\trianglelefteq t_{i}$%
. The partially ordered set $(K^{n},\trianglelefteq )$ has a minimum $%
\overline{n}$, the sequence that takes always $n$ as value, and $2^{n}$
maximal elements, the sequences that take always $0$ or $1$ as value.

If $s$ and $t$ are seen as possible worlds, then $s\trianglelefteq t$ means
that the situation represented by $t$ is at least so defined as the one
described by $s$. What in $s$ has been definitely settled (marked with $0$
or $1$) remains unchanged in the passage to $t$; what has not been
definitely settled in $s$ (marked with $n$) may be (positively or
negatively) settled in $t$. Whereas $V_{s}(\alpha )\leq V_{s}(\alpha )$
means that there is no loss in truth-value in the passage from $\alpha $ to $%
\beta $, $V_{s}(\alpha )\trianglelefteq V_{s}(\alpha )$ means that there is
no loss of information.

\begin{theorem}
For all $s$, $t\in K^{n}$ and all $\alpha \in F^{\ast }$,

\begin{enumerate}
\item  if $s\trianglelefteq t$ then $V_{s}(\alpha )\trianglelefteq
V_{t}(\alpha )$,

\item  if $s\trianglelefteq t$ and $s\in M(\alpha )_{i}$ then $t\in M(\alpha
)_{i}$, for $i=0,1$.
\end{enumerate}
\end{theorem}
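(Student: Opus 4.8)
The plan is to establish part 1 by structural induction on $\alpha$ and then obtain part 2 as an immediate consequence, the common engine being the observation that $0$ and $1$ are the maximal elements of $(K,\trianglelefteq)$ while $n$ is its minimum.

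For part 1, I would fix a pair $s\trianglelefteq t$ and induct on the construction of $\alpha$. In the base case, for a sentential variable $p_i$ we have $V_s(p_i)=s_i$ and $V_t(p_i)=t_i$, and since $s\trianglelefteq t$ is defined pointwise we get $s_i\trianglelefteq t_i$, which is exactly what is needed; for each constant $0$, $1$, $n$ the value does not depend on the world, so $V_s$ and $V_t$ agree and the claim is trivial. The inductive step rests on a single lemma, which I regard as the heart of the argument: the Kleene connectives are monotone for $\trianglelefteq$, i.e. $a\trianglelefteq a'$ and $b\trianglelefteq b'$ imply $a\wedge b\trianglelefteq a'\wedge b'$ and $a\vee b\trianglelefteq a'\vee b'$, and $a\trianglelefteq a'$ implies $\lnot a\trianglelefteq\lnot a'$.

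Rather than grind through the nine entries of each truth table, I would prove this lemma using the maximality of $0$ and $1$. For conjunction, note that $a\wedge b=0$ exactly when $a=0$ or $b=0$, and $a\wedge b=1$ exactly when $a=b=1$. If $a\wedge b=n$ then $n\trianglelefteq a'\wedge b'$ holds automatically, since $n$ is the minimum. If instead $a\wedge b\in\{0,1\}$, then the inputs forcing this value lie in $\{0,1\}$, and because $0$ and $1$ are maximal any such input is preserved under $\trianglelefteq$ (if $c\in\{0,1\}$ and $c\trianglelefteq c'$ then $c'=c$); hence $a'\wedge b'=a\wedge b$ and the desired relation is an equality. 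The disjunction case is dual, with the roles of $0$ and $1$ swapped, and for negation one simply observes that $\lnot$ is an automorphism of $(K,\trianglelefteq)$: it interchanges the maximal elements $0$ and $1$ and fixes the minimum $n$, so it is monotone. Feeding the induction hypothesis on the immediate subformulas into this lemma closes the inductive step.

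For part 2, I would recall that $w\in M(\alpha)_0$ iff $V_w(\alpha)=1$ and $w\in M(\alpha)_1$ iff $V_w(\alpha)=0$. Suppose $s\trianglelefteq t$ and $s\in M(\alpha)_0$, so $V_s(\alpha)=1$; part 1 gives $1=V_s(\alpha)\trianglelefteq V_t(\alpha)$, and since $1$ is maximal this forces $V_t(\alpha)=1$, i.e. $t\in M(\alpha)_0$. The case $i=1$ is verbatim the same with $0$ replacing $1$. The only real obstacle is the connective-monotonicity lemma; once the maximality remark is in hand, both the inductive step and the passage from part 1 to part 2 become short.
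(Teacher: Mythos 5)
Your proof is correct and takes essentially the same route as the paper's: structural induction on $\alpha$ powered by the isotonicity of $\lnot$, $\wedge$, $\vee$ with respect to $\trianglelefteq$, with part 2 then read off from the characterization $s\in M(\alpha )_{0}$ iff $V_{s}(\alpha )=1$ (and $s\in M(\alpha )_{1}$ iff $V_{s}(\alpha )=0$). The only difference is in presentation: where the paper leaves the isotonicity check to the reader (implicitly a truth-table verification), you derive it cleanly from the maximality of $0$ and $1$ and the minimality of $n$ in $(K,\trianglelefteq )$, which also makes explicit the step $1\trianglelefteq V_{t}(\alpha )$ implies $V_{t}(\alpha )=1$ that the paper uses tacitly in part 2.
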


\begin{proof}
1. Firstly, we observe that the functions $\lnot $, $\wedge $, $\vee $
defined on $K$ (see \ref{parparsub}) are isotone. (This can be easily
verified by the reader.) Then we suppose $s\trianglelefteq t$ and prove $%
V_{s}(\alpha )\trianglelefteq V_{t}(\alpha )$ by induction on $\alpha $. If $%
\alpha =p_{i}$ then $V_{s}(p_{i})\trianglelefteq V_{t}(p_{i})$ holds,
because $s_{i}\trianglelefteq t_{i}$. If $\alpha =\beta \wedge \gamma $ then
by induction hypothesis we have $V_{s}(\beta )\trianglelefteq V_{t}(\beta )$
and $V_{s}(\gamma )\trianglelefteq V_{t}(\gamma )$, so the result follows by
isotonicity of $\wedge $ with respect to $\trianglelefteq $. The same proof
can be given for $\vee $ and $\lnot $.

2. \ As we have seen in \ref{parparsub}, $s\in M(\alpha )_{0}$ iff $%
Val_{s}(\alpha )=1$. So from our hypothesis $s\trianglelefteq t$ and point
1) above, $s\in M(\alpha )_{0}$ implies $Val_{t}(\alpha )=1$ and $t\in
M(\alpha )_{0}$. The same proof holds for $M(\alpha )_{1}$.
\end{proof}

\begin{corollary}
The function $M:F^{\ast }\rightarrow D(K^{n})$ is not surjective.
\end{corollary}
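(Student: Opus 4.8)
The plan is to read off non-surjectivity directly from the theorem just proved. That theorem tells us that for every formula $\alpha$ the two components $M(\alpha)_0$ and $M(\alpha)_1$ are \emph{upward closed} with respect to the information order $\trianglelefteq$: if $s\in M(\alpha)_i$ and $s\trianglelefteq t$, then $t\in M(\alpha)_i$. Hence the image of $M$ is confined to those partial sets $(U,U')\in D(K^n)$ whose positive and negative parts are both $\trianglelefteq$-up-sets. To prove non-surjectivity it therefore suffices to exhibit a single element of $D(K^n)$ that violates this closure condition.

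First I would fix the witness $(\{\overline{n}\},\emptyset)$, where $\overline{n}$ is the all-$n$ world, i.e.\ the minimum of $(K^n,\trianglelefteq)$. This is a legitimate element of $D(K^n)$, since its two components are disjoint. Next I would argue by contradiction: suppose $(\{\overline{n}\},\emptyset)=M(\alpha)$ for some $\alpha\in F^{\ast}$. Then $\overline{n}\in M(\alpha)_0$, and since $\overline{n}\trianglelefteq t$ for every $t\in K^n$, part 2 of the theorem forces $t\in M(\alpha)_0$ for all $t$, so $M(\alpha)_0=K^n$. For $n\geq 1$ the set $K^n$ has $3^n\geq 3$ elements, whence $K^n\neq\{\overline{n}\}$, contradicting $M(\alpha)_0=\{\overline{n}\}$. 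Thus no formula has $(\{\overline{n}\},\emptyset)$ as its meaning, and $M$ is not surjective.

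There is essentially no hard step here: the work has already been done in the preceding theorem, and the argument is a one-line application of it to the minimum element $\overline{n}$. The only points requiring minor care are checking that the chosen pair really lies in $D(K^n)$ (disjointness of the components) and recording the harmless hypothesis $n\geq 1$ so that $K^n$ is not a singleton; for $n=0$ one has $M(1)=(K^0,\emptyset)$, $M(0)=(\emptyset,K^0)$, $M(n)=(\emptyset,\emptyset)$, so $M$ is in fact surjective and the degenerate case must be excluded. If one preferred a counting argument instead of an explicit witness, one could note that the number of pairs of $\trianglelefteq$-up-sets is strictly smaller than the cardinality of $D(K^n)$, but the explicit witness is cleaner and makes the failure of surjectivity transparent.
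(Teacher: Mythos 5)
Your proof is correct and takes essentially the same route as the paper: both arguments apply part 2 of the preceding theorem to the $\trianglelefteq$-minimum $\overline{n}$ to conclude that any component of $M(\alpha)$ containing $\overline{n}$ must equal all of $K^{n}$, the only difference being your witness $(\{\overline{n}\},\emptyset)$ versus the paper's choice of a nontrivial Boolean partial set. Your side remark that the degenerate case $n=0$ must be excluded (the paper tacitly assumes $n\geq 1$, where $K^{n}$ has $3^{n}\geq 3$ elements) is a small but accurate refinement.
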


\begin{proof}
For all $s\in K^{n}$ we have $s\trianglelefteq \overline{n}$ , so if $%
\overline{n}\in M(\alpha )_{i}$ then, for all $s\in K^{n}$, $s\in M(\alpha
)_{i}$, by the above theorem, and $M(\alpha )_{i}=K^{n}$. Let $(X,Y)$ be a
Boolean partial set, different from $(\emptyset ,K^{n})$ and $%
(K^{n},\emptyset )$. Then we have $\overline{n}\in $ $X$ or $\overline{n}\in
Y$ and $(X,Y)\neq M(\alpha )$, for all $\alpha $.
\end{proof}

\end{document}